\newcommand*{\barfix}[2][.175ex]{%
  \mathpalette{\@barfix{#1}}{#2}%
}
\newcommand*{\@barfix}[3]{%
  \vbox{%
    \kern#1\relax
    \hbox{$#2#3\m@th$}%
  }%
}
\newtheorem{theorem}{Theorem}
\newtheorem{lemma}[theorem]{Lemma}
\newtheorem{claim}[theorem]{Claim}
\newtheorem{question}[theorem]{Question}
\newcommand{\footremember}[2]{%
    \footnote{#2}
    \newcounter{#1}
    \setcounter{#1}{\value{footnote}}%
}
\newcommand{\footrecall}[1]{%
    \footnotemark[\value{#1}]%
} 
\title{\vspace{-1.5cm}Large matchings and nearly spanning, nearly regular subgraphs of random subgraphs}  
\author{%
Sahar Diskin \footremember{alley}{\scriptsize{School of Mathematical Sciences, Tel Aviv University, Tel Aviv 6997801, Israel. Emails: sahardiskin@mail.tau.ac.il, krivelev@tauex.tau.ac.il.}}%
\and Joshua Erde \footremember{trailer}{\scriptsize{Institute of Discrete Mathematics, Graz University of Technology, Steyrergasse 30, 8010 Graz, Austria. Emails: erde@math.tugraz.at, kang@math.tugraz.at.}}%
\and Mihyun Kang \footrecall{trailer}%
\and Michael Krivelevich \footrecall{alley}%
}
\date{}
\begin{document}
\maketitle

\vspace{-1cm}\begin{abstract}
Given a graph $G$ and $p\in [0,1]$, the random subgraph $G_p$ is obtained by retaining each edge of $G$ independently with probability $p$. We show that for every $\epsilon>0$, there exists a constant $C>0$ such that the following holds. Let $d\ge C$ be an integer, let $G$ be a $d$-regular graph and let $p\ge \frac{C}{d}$. Then, with probability tending to one as $|V(G)|$ tends to infinity, there exists a matching in $G_p$ covering at least $(1-\epsilon)|V(G)|$ vertices. 

We further show that for a wide family of $d$-regular graphs $G$, which includes the $d$-dimensional hypercube, for any $p\ge \frac{\log^5d}{d}$ with probability tending to one as $d$ tends to infinity, $G_p$ contains an induced subgraph on at least $(1-o(1))|V(G)|$ vertices, whose degrees are tightly concentrated around the expected average degree $dp$.
\end{abstract}

\section{Introduction}
A classical result of Erd\H{o}s and R\'enyi \cite{ER66} states that $p=\frac{\log n}{n}$ is the threshold for the existence of a \emph{perfect} matching (that is, a matching covering all but at most one vertex) in $G(n,p)$\footnote{In fact, Erd\H{o}s and R\'enyi worked in the closely related \emph{uniform} random graph model $G(n,m)$.}, which also coincides with the connectivity threshold (see also \cite{BT85} for a hitting time result). Below this threshold, it is not hard to show that with probability tending to one as $n$ tends to infinity a fixed proportion of the vertices are isolated and will not be covered in any matching. On the other hand, it follows from a celebrated result of Karp and Sipser \cite{KS81} from 1981 that,
when $p=\frac{C}{n}$ for a large enough constant $C$, $G(n,p)$ contains a matching on $(1-o_C(1))n$ vertices with probability tending to one as $n$ tends to infinity. Subsequent work by Frieze \cite{F86} gave a precise estimate of the asymptotic proportion of vertices which are not covered by a largest matching in this regime.

The binomial random graph $G(n,p)$ is an instance of the model of \textit{bond percolation}. Given a host graph $G$ and a probability $p\in [0,1]$, we form the random subgraph $G_p \subseteq G$ by retaining each edge of $G$ independently with probability $p$ (indeed, $G(n,p)$ is equivalent to performing bond percolation on the complete graph $K_n$). In a qualitative sense, our first main result extends the result of Karp and Sipser \cite{KS81} to any $d$-regular graph.
\begin{theorem}\label{th: matching}
For every $\epsilon>0$, there exists a constant $C>0$ such that the following holds. Let $d\in \mathbb{N}, d\ge C$, let $G$ be a $d$-regular graph on $n$ vertices, and let $p\ge \frac{C}{d}$. Then, with probability tending to one as $n$ tends to infinity, there exists a matching in $G_p$ covering at least $(1-\epsilon)n$ vertices.
\end{theorem}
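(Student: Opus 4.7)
My plan is a two-round (sprinkling) exposure argument. Write $p = 1 - (1-p_1)(1-p_2)$ with $p_1, p_2 \ge C'/d$ for a sufficiently large constant $C' = C'(\epsilon)$, and couple $G_p = H_1 \cup H_2$ with $H_1, H_2$ independent samples distributed as $G_{p_1}, G_{p_2}$. The first round produces a large initial matching $M_0 \subseteq H_1$; the second round enlarges it using the independent edges of $H_2$ via augmenting paths.

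For the first round, let $M_0$ be a maximum matching in $H_1$ and set $U_0 := V(G) \setminus V(M_0)$. Maximality gives the structural property that there is no $M_0$-augmenting path in $H_1$, and in particular $U_0$ is an independent set in $H_1$. A Chernoff bound, using $dp_1 \ge C'$, shows that with high probability almost every vertex has $H_1$-degree of order $C'$, and in particular the number of $H_1$-isolated vertices is at most $n e^{-\Omega(C')}$. One hopes already to have $|U_0| \le \epsilon n$, but in general only the weaker bound $|U_0| \le \alpha n$ for some constant $\alpha \in (\epsilon, 1]$ follows from this round, and the second round must close the gap.

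In the second round, I aim to find roughly $(|U_0| - \epsilon n)/2$ vertex-disjoint $M_0$-augmenting paths of length three, namely paths $u_1 v_1 v_2 u_2$ with distinct $u_1, u_2 \in U_0$, $\{v_1, v_2\} \in M_0$, and $u_1 v_1, v_2 u_2 \in E(H_2) \cap E(G)$. Toggling each such path reduces $|U_0|$ by two, yielding a matching of $G_p$ of the required size. The $d$-regularity of $G$ enters when counting potential paths: writing $a(e), b(e)$ for the number of $G$-edges from $U_0$ to the two endpoints of $e \in M_0$, one has $\sum_{e \in M_0} (a(e) + b(e)) \ge (d-1)|U_0|$. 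A second moment estimate on the number of augmenting paths surviving the independent $H_2$-thinning, combined with a greedy vertex-disjoint selection, should then provide the required count.

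The main obstacle is that the length-three argument fails precisely when, for many matched edges $e$, the $G$-edges from $U_0$ into $e$ all come from a single endpoint, so that $a(e) b(e) = 0$ and no length-three augmenting path through $e$ exists. In $K_n$ this issue cannot arise because the bipartite $G$-structure between $U_0$ and $V(M_0)$ is essentially complete; for a general $d$-regular host we have only regularity, not expansion, and such unbalanced configurations are a priori possible. Overcoming this likely requires falling back to longer augmenting paths (length five or more, at the cost of more delicate vertex-disjointness bookkeeping), or a swap/exchange argument that rebalances $M_0$ so that $a(e), b(e)$ are typically both positive, or an iterative multi-round sprinkling together with a uniform union bound over the plausible first-round outcomes $(M_0, U_0)$. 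Taming this structural asymmetry is the technical heart of the proof.
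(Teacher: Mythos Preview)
Your proposal is not a proof but a plan with an explicitly unresolved obstacle, and the obstacle is real. For an arbitrary $d$-regular host you have no expansion whatsoever, so there is nothing preventing the bipartite structure between $U_0$ and $V(M_0)$ from being completely one-sided on most matched edges; none of the fallbacks you list (longer augmenting paths, swap arguments, iterated sprinkling with a union bound over $(M_0,U_0)$) comes with an argument, and it is not clear any of them can be made to work uniformly over all $d$-regular $G$. Your first-round claim is also soft: knowing that few vertices are $H_1$-isolated does not by itself yield $|U_0|\le \alpha n$ for any $\alpha<1$ without some control on the degree distribution (a union of stars has no isolated vertices and a tiny matching).

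The paper's proof avoids the augmenting-path machinery entirely and is much shorter. Take $p=C/d$ by monotonicity. Let $V_0$ be the set of vertices whose $G_p$-degree exceeds $(1+\delta)C$; a Chernoff bound plus Azuma--Hoeffding (the edge-exposure martingale, with Lipschitz constant $O(C)$ for $|E_0|$) shows that \textbf{whp} the set $E_0$ of edges meeting $V_0$ has $|E_0|\le \tfrac{\epsilon}{4}n$, while $|E(G_p)|\ge \tfrac{Cn}{2}-\tfrac{\epsilon}{4}n$. The induced subgraph $H=G_p[V\setminus V_0]$ then has maximum degree at most $(1+\delta)C$, so by Vizing's theorem its edges decompose into at most $(1+\delta)C$ matchings; the largest one has at least
\[
\frac{|E(H)|}{(1+\delta)C}\ \ge\ \frac{|E(G_p)|-|E_0|}{(1+\delta)C}\ \ge\ \frac{(C-\epsilon)n}{2(1+\delta)C}
\]
edges, hence covers at least $(1-\epsilon)n$ vertices for suitable $\delta,C$. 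The point is that once the maximum degree is close to the average degree, an edge-colouring argument gives a near-perfect matching for free; no augmenting paths, no sprinkling, and no structural assumption on $G$ beyond regularity is needed.
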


In addition to the typical existence of large matchings in percolated $d$-regular graphs, we also explore typical structural properties of the hypercube under percolation. 

In the setting of $G(n,p)$, finding a large $k$-regular subgraph when $p=\frac{C}{n}$ has been extensively studied (see, for example, \cite{KSW11, MMP} and references therein). A natural variant is to try and find a \textit{nearly regular}, nearly spanning subgraph (see \cite{AKS08} for an extremal variant of this question). We consider this question in the setting of random subgraphs of the hypercube. Recall that the $d$-dimensional binary hypercube is the graph whose vertex set is $\{0,1\}^d$, and where two vertices are connected if and only if their Hamming distance is one. In our second main result, we establish the typical existence of a nearly regular, nearly spanning induced subgraph of $Q^d_p$, whose degrees are tightly concentrated around the expected degree $dp$.
\begin{theorem}\label{th: induced}
For every $\epsilon>0$, there exists $d_0\in \mathbb{N}$ such that the following holds for every integer $d\ge d_0$. Let $p\ge \frac{\log^5 d}{d}$. Then, with probability tending to one as $d$ tends to infinity there exists an induced subgraph $H\subseteq Q^d_p$, such that $|V(H)|\ge (1-\epsilon)2^d$ and for every $v \in V(H)$, $\left|1-\frac{d_H(v)}{dp}\right|\le \epsilon$.
\end{theorem}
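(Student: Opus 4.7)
The plan is to identify a small ``bad'' set $B \subseteq V(Q^d)$ and take $H = Q^d_p[V(Q^d) \setminus B]$. Setting $\delta = \epsilon/C$ for a sufficiently large constant $C = C(\epsilon)$, I would first let
\[
B_0 = \left\{v \in V(Q^d) : d_{Q^d_p}(v) \notin [(1-\delta)dp, (1+\delta)dp]\right\}.
\]
Since $d_{Q^d_p}(v) \sim \mathrm{Bin}(d,p)$ with $dp \ge \log^5 d$, a Chernoff bound gives $q := \mathbb{P}(v \in B_0) \le 2\exp(-\delta^2 dp/3) = \exp(-\Omega(\log^5 d))$, so $\mathbb{E}[|B_0|] = o(2^d)$. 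An application of Azuma's inequality on the edge-exposure martingale (each edge affects at most two vertex degrees and so changes $|B_0|$ by at most $2$) then yields $|B_0| \le \delta 2^d$ with high probability.

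The key structural observation, which drives the rest of the proof, is that for every $v \in V(Q^d)$ the events $\{u \in B_0\}_{u \in N_{Q^d}(v)}$ are \emph{mutually independent}: any two distinct $u, u' \in N_{Q^d}(v)$ lie at Hamming distance $2$ in $Q^d$, so the edges of $Q^d$ incident to $u$ and to $u'$ are disjoint, and ``$u \in B_0$'' depends only on the $d$ edges at $u$. Consequently $|N_{Q^d_p}(v) \cap B_0|$ is stochastically dominated by $\mathrm{Bin}(d, pq)$, and a Chernoff bound gives
\[
\mathbb{P}\!\left(|N_{Q^d_p}(v) \cap B_0| > \delta dp\right) \le (eq/\delta)^{\delta dp} = \exp(-\Omega(\log^{10}d)) = o(1).
\]
Setting $B_1 = \{v \notin B_0 : |N_{Q^d_p}(v) \cap B_0| > \delta dp\}$ and applying Azuma's inequality once more (with Lipschitz constant $O(d)$, since flipping one edge alters $B_0$ at up to $2$ vertices, each of which has $d$ hypercube neighbours) yields $|B_1| \le \delta 2^d$ whp.

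I would then iterate: for $t \ge 1$, define $B_{t+1} = \{v \notin B^{(t)} : |N_{Q^d_p}(v) \cap B^{(t)}| > \delta dp\}$ where $B^{(t)} = \bigcup_{s \le t} B_s$, and set $B = \bigcup_t B_t$. Once the cumulative size satisfies $|B^{(t)}|/2^d < \delta/2$, the expected number of $Q^d_p$-neighbours of any $v$ lying in $B^{(t)}$ is at most $\delta dp/2$, safely below the threshold; a concentration argument should then give super-geometric decay $|B_{t+1}| = o(|B_t|)$, so that the closure stabilises after $O(1)$ rounds with $|B| \le \epsilon 2^d$ whp. The uniform threshold then gives $|N_{Q^d_p}(v) \cap B| \le \delta dp$ for every $v \notin B$, and combined with $d_{Q^d_p}(v) \in [(1-\delta)dp, (1+\delta)dp]$ one obtains $d_H(v) = d_{Q^d_p}(v) - |N_{Q^d_p}(v) \cap B| \in [(1-\epsilon)dp, (1+\epsilon)dp]$, as required.

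The main obstacle is making the iterative step rigorous for $t \ge 2$: the clean independence argument of the second paragraph fails for $B^{(t)}$ with $t \ge 1$, since the membership ``$u \in B^{(t)}$'' depends on a larger neighbourhood of $u$, and distinct $u, u' \in N_{Q^d}(v)$ share vertices in their $2$-neighbourhoods, creating weak correlations. One must either control these correlations via a second-moment bound on $|N_{Q^d_p}(v) \cap B^{(t)}|$ (noting that two $Q^d$-neighbours of $v$ share only $O(1)$ vertices in their relevant local structure) or use an amortised edge-counting argument exploiting the edge-isoperimetric inequality of the hypercube. Verifying that the cumulative size $|B|$ does not exceed $\epsilon 2^d$ through the iteration, rather than blowing up after several rounds, is where the bulk of the technical effort lies.
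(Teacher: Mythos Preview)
Your framework---iterative pruning of ``bad'' vertices---matches the paper's, and your analysis of $B_0$ and $B_1$ is correct (the independence observation for neighbours of $v$ is exactly the bipartiteness/parity fact the paper exploits). The genuine gap is the one you yourself flag: the step from $t=1$ to $t\ge 2$. Your proposed remedy, that once $|B^{(t)}|/2^d<\delta/2$ the expected number of $Q^d_p$-neighbours of \emph{any} $v$ in $B^{(t)}$ is at most $\delta dp/2$, is false: a global density bound on $B^{(t)}$ says nothing about $|N_{Q^d}(v)\cap B^{(t)}|$ for a particular $v$, and indeed $B^{(t)}$ could contain the entire neighbourhood of some vertices while still having density $o(1)$. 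So neither the decay $|B_{t+1}|=o(|B_t|)$ nor termination in $O(1)$ rounds follows from what you have written, and the second-moment and isoperimetric suggestions are not fleshed out enough to close the gap.

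The paper resolves exactly this obstacle with one additional idea you are missing: it uses an \emph{increasing} sequence of thresholds $\delta_t=t\delta/\lfloor\log d\rfloor$ rather than a fixed $\delta$. With your fixed threshold, $v\in B_{t+1}\setminus B_t$ only forces $|N(v)\cap B_t|\ge 1$, which is useless. With increasing thresholds, $v\in A_t\setminus A_{t-1}$ forces at least $(\delta_t-\delta_{t-1})dp=\Theta(\delta dp/\log d)$ neighbours of $v$ to lie in $A_{t-1}$. Iterating this, one traces a set of $(\Theta(\delta dp/\log d))^{t-1}$ vertices at distance exactly $t-1$ from $v$ that all lie in $A_1$; since these are at a fixed parity they form an independent set, and your clean independence argument applies \emph{at the bottom level} rather than at level $t$. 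This sidesteps the correlations you were worried about entirely, and a union bound over $2^d$ vertices then shows $A_\tau=\varnothing$ for $\tau=\lfloor\log d\rfloor$. So the missing ingredient is not a sharper concentration tool but the graded-threshold trick that converts membership in $A_t$ into a statement purely about $A_1$.
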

We note that the proofs of the theorems are quite short. Further, let us remark that we have not tried to optimise the polylogarithmic dependence of $p$ on $d$ in Theorem \ref{th: induced}. Finally, we note that we present the proof for $Q^d$, but, as will be expanded upon in the discussion section, Theorem \ref{th: induced} holds for a fairly wide family of graphs (see Theorem \ref{th: general}). 

In Section \ref{s: proofs} we prove Theorems \ref{th: matching} and \ref{th: induced}. In Section \ref{s: discussion} we discuss our results and avenues for future research.

\section{Proofs of Theorems \ref{th: matching} and \ref{th: induced}}\label{s: proofs}
Given a graph $G$ and $v\in V(G)$, we denote by $d_G(v)$ the degree of $v$ in $G$. For $k\in\mathbb{N}$, we denote by $N_G^k(v)$ the set of vertices at distance exactly $k$ from $v$ in $G$, where $N_G(v)\coloneqq N_G^1(v)$. When the graph $G$ is clear from the context, we will omit the subscripts. Further, given $S\subseteq V(G)$, we denote by $G[S]$ the subgraph of $G$ induced by the vertices of $S$. Given $x,y,z \in \mathbb{R}$ we write $x = y \pm z$ as shorthand for $x \in [y-z, y+z]$. Throughout the paper, all logarithms are in the natural base.

We will make use of a typical Chernoff-type tail bound on the binomial distribution (see, for example, Appendix A in \cite{AS16}).
\begin{lemma}\label{l: chernoff}
Let $d\in \mathbb{N}$, let $p\in [0,1]$, and let $X\sim \mathrm{Bin}(d,p)$. Then for any $0<t\le \frac{dp}{2}$, 
\begin{align*}
    &\mathbb{P}\left[X\neq dp\pm t\right]\le 2\exp\left\{-\frac{t^2}{3dp}\right\}.
\end{align*}
\end{lemma}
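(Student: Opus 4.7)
The plan is to apply the standard Chernoff moment-generating-function technique to each tail separately and then union-bound. Write $X = \sum_{i=1}^d X_i$, where the $X_i$ are independent Bernoulli random variables with parameter $p$, so that $\mathbb{E}[e^{\lambda X}] = (1-p+pe^{\lambda})^d$ for every $\lambda \in \mathbb{R}$. Using the standard inequality $1-p+pe^{\lambda} \le \exp(p(e^{\lambda}-1))$, this yields the clean bound $\mathbb{E}[e^{\lambda X}] \le \exp(dp(e^{\lambda}-1))$.

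For the upper tail, I would invoke Markov's inequality on $e^{\lambda X}$ with $\lambda>0$ to obtain
\[
\mathbb{P}[X \ge dp+t] \le e^{-\lambda(dp+t)}\,\mathbb{E}[e^{\lambda X}] \le \exp\bigl(dp(e^{\lambda}-1) - \lambda(dp+t)\bigr),
\]
and then optimise by taking $\lambda = \log(1 + t/(dp))$. A short computation shows the right-hand side becomes $\exp(-dp\cdot h(t/(dp)))$, where $h(x) = (1+x)\log(1+x) - x$. Analogously, for the lower tail, applying Markov to $e^{-\lambda X}$ with $\lambda > 0$ and optimising gives $\mathbb{P}[X \le dp - t] \le \exp(-dp \cdot \tilde h(t/(dp)))$, where $\tilde h(x) = (1-x)\log(1-x) + x$ for $x \in (0,1)$.

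The main calculation — and the only even mildly non-routine step — is to check that for $0 < x \le 1/2$ (which corresponds to the hypothesis $t \le dp/2$), both $h(x) \ge x^2/3$ and $\tilde h(x) \ge x^2/3$ hold. This can be verified by comparing Taylor expansions: $h(x) = x^2/2 - x^3/6 + O(x^4)$ and $\tilde h(x) = x^2/2 + x^3/6 + O(x^4)$, so the leading quadratic term dominates comfortably on $[0,1/2]$, and in particular $\tilde h$ already satisfies the stronger inequality $\tilde h(x) \ge x^2/2$ on $(0,1)$. Plugging in $x = t/(dp)$ gives the bound $\exp(-t^2/(3dp))$ on each tail, and a union bound then produces the desired factor of $2$, completing the proof.
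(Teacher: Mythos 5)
The paper does not actually prove this lemma --- it is quoted as a standard Chernoff-type tail bound with a pointer to Appendix A of \cite{AS16} --- and your argument is exactly the standard moment-generating-function proof found there (exponentiate, apply Markov, optimise $\lambda$, bound the rate functions $h$ and $\tilde h$ from below by a quadratic), so it is correct and there is nothing substantive to compare. The one step I would tighten is the verification that $h(x)=(1+x)\log(1+x)-x\ge x^2/3$ on $(0,1/2]$: ``comparing Taylor expansions'' does not by itself control the remainder on the whole interval, whereas the one-line argument that $g(x)\coloneqq h(x)-x^2/3$ satisfies $g(0)=g'(0)=0$ and $g''(x)=\frac{1}{1+x}-\frac{2}{3}\ge 0$ for $x\le 1/2$ (and similarly $\tilde h''(x)=\frac{1}{1-x}\ge 1$, giving $\tilde h(x)\ge x^2/2$) closes this cleanly and also shows where the hypothesis $t\le dp/2$ is genuinely used.
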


We will also utilise a variant of the well-known Azuma-Hoeffding inequality (see, for example, Chapter 7 in \cite{AS16}),
\begin{lemma}\label{l: azuma}
Let $m\in \mathbb{N}$ and let $p\in [0,1]$. Let $X = (X_1,X_2,\ldots, X_m)$ be a random vector with range $\Lambda = \{0,1\}^m$ with each $X_{i}$ distributed according to independent Bernoulli$(p)$. Let $f:\Lambda\to\mathbb{R}$ be a function such that there exists $K \in \mathbb{R}$ such that for every $x,x' \in \Lambda$ which differ only in one coordinate, $|f(x)-f(x')|\le K$. Then, for every $t\ge 0$,
\begin{align*}
    \mathbb{P}\left[\big|f(X)-\mathbb{E}\left[f(X)\right]\big|\ge t\right]\le 2\exp\left\{-\frac{t^2}{2K^2mp}\right\}.
\end{align*}
\end{lemma}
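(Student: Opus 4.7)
The plan is to establish this via the classical Doob martingale approach, but using a variance-aware bound on the conditional moment generating function that exploits the Bernoulli$(p)$ distribution of each coordinate, rather than just $\{0,1\}$-valuedness as in the ordinary Azuma--Hoeffding inequality.

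First I would set up the Doob martingale $Z_i = \mathbb{E}[f(X) \mid X_1, \ldots, X_i]$ for $i=0,1,\ldots,m$, so that $Z_0 = \mathbb{E}[f(X)]$ and $Z_m = f(X)$, and write $D_i = Z_i - Z_{i-1}$ and $\mathcal{F}_{i-1} = \sigma(X_1,\ldots,X_{i-1})$. Defining $a_i = \mathbb{E}[f(X) \mid \mathcal{F}_{i-1}, X_i = 0]$ and $b_i = \mathbb{E}[f(X) \mid \mathcal{F}_{i-1}, X_i = 1]$, the bounded-differences hypothesis (transferred through the conditional expectation over $X_{i+1},\ldots,X_m$) gives $|b_i - a_i|\le K$, and a direct calculation yields the explicit form $D_i = (b_i - a_i)(X_i - p)$, exhibiting each martingale difference as an $\mathcal{F}_{i-1}$-measurable scaling of the centred Bernoulli variable $X_i - p$.

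Second, expanding the conditional MGF of $D_i$ and applying the inequality $\log(1+x) \le x$ yields
\begin{equation*}
\mathbb{E}\bigl[e^{\lambda D_i} \bigm| \mathcal{F}_{i-1}\bigr] \le \exp\bigl(p\bigl(e^{\lambda(b_i - a_i)} - 1 - \lambda(b_i - a_i)\bigr)\bigr),
\end{equation*}
and combining this with $e^u - 1 - u \le \tfrac{u^2}{2} e^{|u|}$ together with $|b_i - a_i| \le K$ bounds the right-hand side by $\exp\bigl(\tfrac{1}{2}\lambda^2 K^2 p \cdot e^{|\lambda| K}\bigr)$, which is essentially the target $\exp\bigl(\tfrac{1}{2}\lambda^2 K^2 p\bigr)$ whenever $|\lambda| K$ is bounded. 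Iterating via the tower property then gives $\mathbb{E}[e^{\lambda(f(X) - \mathbb{E}[f(X)])}] \le \exp\bigl(\tfrac{1}{2}\lambda^2 K^2 m p \cdot e^{|\lambda| K}\bigr)$, and a standard Markov step optimised at $\lambda = t/(K^2 m p)$ produces the one-sided tail $\exp\bigl(-t^2/(2K^2 m p)\bigr)$; the two-sided bound follows by applying the same argument to $-f$ and taking a union bound.

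The main obstacle is the stray factor $e^{|\lambda| K}$ in the MGF bound, which is what distinguishes the clean Gaussian-type tail in the statement from a Bernstein-type tail. Handling it cleanly requires either restricting implicitly to the range where $|\lambda| K = O(1)$ (analogous to the condition $t \le dp/2$ imposed in Lemma \ref{l: chernoff}), or replacing the crude quadratic remainder by the sharper Bennett estimate involving $(1+u)\log(1+u)-u$ so that the remainder is absorbed exactly. In the applications appearing later in the paper, the relevant deviation $t$ is chosen well within this Gaussian regime, so this technical subtlety does not affect the subsequent deductions.
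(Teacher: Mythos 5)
The paper does not actually prove this lemma; it is quoted as a standard variant of the Azuma--Hoeffding inequality from \cite{AS16}, so there is no in-paper argument to compare against. Judged on its own terms, your set-up is the right one and the computations are correct as far as they go: the Doob martingale, the identity $D_i=(b_i-a_i)(X_i-p)$ with $|b_i-a_i|\le K$ (the bounded-differences hypothesis passes through the conditional expectation over $X_{i+1},\ldots,X_m$), and the conditional MGF bound $\mathbb{E}\left[e^{\lambda D_i}\mid \mathcal{F}_{i-1}\right]\le \exp\left\{p\left(e^{\lambda c_i}-1-\lambda c_i\right)\right\}$ with $c_i=b_i-a_i$ are all exactly as they should be.

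However, the obstacle you flag at the end is not a ``technical subtlety'' that can be waved away, and neither of your proposed fixes closes it. The centred Bernoulli increment is Poisson-type rather than sub-Gaussian with variance proxy $p$: for $\lambda>0$ the remainder $e^{u}-1-u$ genuinely exceeds $u^2/2$, and correspondingly a clean Gaussian tail with variance proxy $K^2mp$ cannot hold for all $t\ge 0$. Concretely, taking $f(x)=\sum_i x_i$ and $K=1$, one has $\mathbb{P}\left[\mathrm{Bin}(m,p)\ge 2mp\right]=\exp\left\{-(2\log 2-1+o(1))mp\right\}$, which exceeds the claimed bound $\exp\left\{-mp/2\right\}$ at $t=mp$ since $2\log 2-1<1/2$; so a restriction of the form $t=O(Kmp)$ (analogous to the hypothesis $t\le dp/2$ in Lemma~\ref{l: chernoff}) is genuinely needed, together with an adjustment of the constant in the exponent to absorb the factor $e^{|\lambda|K}$. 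Your fix (a) amounts to adding such a hypothesis, which is legitimate but changes the statement being proved. Your fix (b) does not work: Bennett's estimate gives exponent $-mp\,h\!\left(t/(Kmp)\right)$ with $h(u)=(1+u)\log(1+u)-u\le u^2/2$, i.e.\ a bound that is \emph{weaker} than the target for every $t>0$, so it cannot ``absorb the remainder exactly.'' You are right that in the single application in the paper one has $t=\epsilon n/8$ while $Kmp=\Theta(C^2n)$, so $|\lambda|K=O(1)$ there and your argument does yield what the paper needs up to constants; but as a proof of the lemma as stated, for all $t\ge 0$, the final step has a genuine gap.
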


\subsection{Proof of Theorem \ref{th: matching}}
Fix $\epsilon>0$. Let $\delta\coloneqq \delta(\epsilon)>0$ and $C \coloneq C(\delta,\epsilon)=C(\epsilon) >0$ be constants satisfying that $C\exp\left\{-\frac{\delta^2C}{16}\right\}\le \frac{\epsilon}{4}$ and $\frac{C-\epsilon}{(1+\delta)C}\ge 1-\epsilon$. Note that by monotonicity, we may assume $p= \frac{C}{d}$. Let $V_0$ be the set of vertices of degree at least $(1+\delta)C$, and let $E_0$ be the set of edges touching $V_0$, that is,
\begin{align*}
    V_0\coloneqq \{v\in V(G)\colon d_{G_p}(v)\ge (1+\delta)C\} \quad \text{ and }  \quad E_0\coloneqq \left\{ uv \in E\left(G_p\right) \colon \{u,v\} \cap V_0 \neq \emptyset\right\}.
\end{align*}

For any $e\in E(G)$ the probability that $e\in E(G_p)$ \textit{and} at least one of its endpoints is in $V_0$ is at most $p\cdot 2\mathbb{P}\left(\text{Bin}\left(d,\frac{C}{d}\right)\ge (1+\delta)C-1\right)$, which is bounded above by $\frac{C}{d}\cdot 4\exp\left\{-\frac{\delta^2C}{4}\right\}$ by Lemma \ref{l: chernoff} (where we assumed that $\delta C\ge 9$). Hence, using $|E(G)|=\frac{dn}{2}$, we have $\mathbb{E}\left[|E_0|\right]\le 2C\exp\left\{-\frac{\delta^2C}{4}\right\}n \leq \frac{\varepsilon}{8}n$. Now, note that adding/removing any edge can change $|V_0|$ by at most two, and thus $|E_0|$ by at most $2(1+\delta)C$. Hence, by Lemma \ref{l: azuma} (with $t=\frac{\epsilon n}{8}$, $K=2(1+\delta)C$, and $m=\frac{nd}{2}$), and using $\exp\left\{-\frac{\delta^2C}{4}\right\}\le \frac{\epsilon}{16 C}$ we obtain
\begin{align*}
    \mathbb{P}\left[|E_0|\ge \frac{\epsilon}{4}n\right]\le 2\exp\left\{-\frac{(\epsilon n/8)^2}{2\cdot 4(1+\delta)^2C^2\cdot nd/2\cdot C/d}\right\}=\exp\left\{-\Omega(n)\right\}=o_n(1).
\end{align*}
By similar (and simpler) arguments, with probability $1-o_n(1)$, $|E(G_p)|\ge \frac{ndp}{2}-\frac{\epsilon n}{4} = \frac{Cn}{2} - \frac{\epsilon}{4} n$.  

Let $H$ be the subgraph of $G_p$ induced by $V\setminus V_0$. Note that for every $v\in V(G)$, we have that $d_{H}(v)<(1+\delta)C$. Hence, by Vizing's theorem \cite{V64}, there exists a proper colouring of $H$ with $(1+\delta)C$ colours. Therefore, with probability $1-o_n(1)$ there is a matching in $H$ (and thus in $G_p$) covering at least
\begin{align*}
    \frac{2 |E(H)|}{(1+\delta)C}=\frac{2(|E(G_p)|-|E_0|)}{(1+\delta)C}\ge \frac{(C - \varepsilon)}{(1+\delta)C}n\ge (1-\epsilon) n
\end{align*}
vertices, as required. \qed

\subsection{Finding a nearly regular, nearly spanning subgraph}
In this section, we prove Theorem \ref{th: induced}. For ease of presentation, we write $G \coloneqq Q^d$, so that $G_p = Q^d_p$. Throughout the section, we assume $d\in\mathbb{N}$ is sufficiently large, and all asymptotic notation in this section will be with respect to the parameter $d$. Further, throughout this section, we let $\delta\coloneqq \delta(d)$ be a function tending to $0$ arbitrarily slowly as $d$ tends to infinity. We recall our assumption that $dp\ge \log^5d$, and in particular, $dp=\omega(1)$ (we will use this fact at various points in the proof).

We will analyse the following `pruning' process on $G_p$. For each $t \in \mathbb{N}$ let $$\delta_t \coloneqq \frac{t\cdot \delta}{\lfloor\log d\rfloor}.$$ Let $\tau\coloneqq \lfloor\log d\rfloor$ so that $\delta_{\tau}=\delta$. Let $H_1\coloneqq G_p$, and let
\begin{align}\label{e:A1}
    A_1\coloneqq \left\{v\in V(H_1) \colon d_{H_1}(v)\neq (1 \pm \delta_1)dp\right\}.
\end{align}
We then proceed as follows: At the $t$-th iteration, for $t\in [2,\tau]\cap\mathbb{N}$, we let $H_t\coloneqq G_p\left[V(G)\setminus \bigcup_{i=1}^{t-1}A_i\right]$, and let 
\begin{align*}
    A_t\coloneqq \left\{v\in V(H_{t})\colon d_{H_{t}}(v)< (1-\delta_t)dp\right\}.
\end{align*}
We run this process until $t= \tau$, and let 
\begin{align*}
H\coloneqq H_{\tau}\quad \text{ and }\quad A\coloneqq \bigcup_{t=1}^{\tau}A_t.
\end{align*}
Note that if for some $1\le t \le \tau$, we have $A_t=\varnothing$, then the process stabilises, that is, for every $t'\ge t$, we would have that $A_{t'}=\varnothing$ and $H_{t'}=H_t$. 

We will make use of the following observations:
\begin{enumerate}[(i)]
    \item The sets $\{A_1,\ldots,A_\tau\}$ are pairwise disjoint;
    \item\label{i:third} If $A_{\tau}=\varnothing$, then for every $v\in V(G)\setminus A$, $d_{H}(v) = (1\pm \delta)dp$.
\end{enumerate}
The first one is apparent by construction. To see \ref{i:third} assume that $A_{\tau}=\varnothing$. Then, every $v\in \left(V(G) \setminus A\right) = \left(V(G)\setminus \bigcup_{t=1}^{\tau-1}A_t\right)=V(H_{\tau})$ satisfies $d_{H}(v)\ge (1-\delta_{\tau})dp= (1-\delta)dp$. On the other hand, since $v \in \left(V(G) \setminus A\right) \subseteq \left(V(G) \setminus A_1\right)$, it follows that $d_H(v) \le d_{G_p}(v) \le (1 + \delta_1) dp \leq (1+\delta)dp$. 

In particular, by \ref{i:third}, in order to prove Theorem \ref{th: induced}, it suffices to show that \textbf{whp}\footnote{With high probability, that is, with probability tending to one as $d$ tends to infinity.} 
\begin{align*}
    A_{\tau}=\varnothing\quad\text{ and }\quad|A|=o(2^d).    
\end{align*} 
Key to the proof is the following lemma, which gives a simple-to-analyse condition for when $v\in A_t$ for $t\in [2,\tau]\cap\mathbb{N}$.
\begin{lemma}\label{l: key lemma}
Let $t\in[2, \tau]\cap \mathbb{N}$. If $v\in A_t$, then there is a set $X$ of at least $\left(\frac{\delta dp}{2t\log d}\right)^{t-1}$ vertices at distance exactly $t-1$ from $v$, such for all $x \in X$,
\[
d_{G_p}(x) \neq (1\pm \delta_1)dp.
\]
\end{lemma}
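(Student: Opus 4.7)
My plan is to construct a large family of geodesic paths in $Q^d$ starting at $v$ and ending in $A_1$, and then bound the number of distinct endpoints. The key preliminary observation is a degree-drop statement: for any $s\in[2,\tau]\cap\mathbb{N}$ and any $u\in A_s$, the facts $u\in V(H_s)\subseteq V(H_{s-1})$ and $u\notin A_{s-1}$ give $d_{H_{s-1}}(u)\ge (1-\delta_{s-1})dp$ and $d_{H_s}(u)<(1-\delta_s)dp$. Hence $u$ has strictly more than $(\delta_s-\delta_{s-1})dp=\delta dp/\tau$ neighbors in $G_p$ that lie in $V(H_{s-1})\setminus V(H_s)=A_{s-1}$.

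Starting from $v=v_0\in A_t$, I would iteratively extend paths $v_0,v_1,\ldots,v_{t-1}$ in $G_p$ subject to two requirements: $v_j\in A_{t-j}$ for every $j$, and the edges $v_{j-1}v_j$ flip pairwise distinct coordinates of $Q^d$ (so that the path is automatically a geodesic). At step $j\in[1,t-1]$, the current endpoint $v_{j-1}$ lies in $A_{t-j+1}$, so by the key observation it has more than $\delta dp/\tau$ neighbors in $A_{t-j}$; at most $j-1\le t-2$ of these correspond to flipping a coordinate already used along the path. Since $dp/\tau\ge\log^4 d\gg t$, we retain at least $\delta dp/(2\tau)$ valid extensions at every step, yielding at least $(\delta dp/(2\tau))^{t-1}$ such paths in total. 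Each ends at a vertex $v_{t-1}\in A_1$ at Hamming distance exactly $t-1$ from $v$.

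Finally, each endpoint $x$ at distance $t-1$ from $v$ is produced by at most $(t-1)!$ of these paths, one per ordering of the $t-1$ coordinates in which $v$ and $x$ differ. Dividing, the number of distinct endpoints is at least
\[
\frac{1}{(t-1)!}\left(\frac{\delta dp}{2\tau}\right)^{t-1}\ge \left(\frac{\delta dp}{2t\log d}\right)^{t-1},
\]
where the inequality uses $\tau\le\log d$ together with $(t-1)!\le t^{t-1}$. Taking $X$ to be this set of endpoints gives the lemma. The main subtlety is verifying that the distinct-coordinate constraint and the resulting overcount of $(t-1)!$ per endpoint together cost at most a factor of $t^{t-1}$ in the count — which is precisely what the slightly weaker base $2t\log d$ (rather than $2\tau$) in the lemma's bound is tailored to absorb; the assumptions $dp\ge\log^5 d$ and $t\le\tau$ leave plenty of slack for both this and the "we lose at most half the branches" step above.
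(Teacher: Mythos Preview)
Your proof is correct and follows essentially the same approach as the paper: both use the degree drop between $H_{s-1}$ and $H_s$ to show that every vertex of $A_s$ has at least $\delta dp/\lfloor\log d\rfloor$ neighbours in $A_{s-1}$, and then iterate this observation from $v\in A_t$ down to a large subset of $A_1$ at distance $t-1$ from $v$. The only cosmetic difference is bookkeeping --- the paper tracks sets $S_i\subseteq N^i(v)\cap A_{t-i}$ and absorbs the overcount by a factor of $i+1$ at each step via double counting, whereas you build geodesic paths and divide once by $(t-1)!$ at the end; both routes yield the same $t^{t-1}$ loss absorbed into the denominator of the bound.
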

\begin{proof}
We will use the following observation about the structure of $Q^d$, which is easy to verify:
\begin{equation}\label{e:cubestructure}
\begin{array}{c}
\text{If $k \leq d$ and $v$ and $w$ are at distance $k$ in $Q^d$, then $w$ has precisely $d-k$ neighbours} \\
\text{ at distance $k+1$ from $v$ and $k$ neighbours at distance $k-1$ from $v$.}
\end{array}
\end{equation}
The lemma will follow from iteratively applying the next fairly simple claim.
\begin{claim}\label{c: helping lemma}
Let $t\in [2,\tau]\cap \mathbb{N}$ and $k\in [1,t]\cap\mathbb{N}$, let $v\in V(G)$, and let $S\subseteq \left(A_t \cap N^k(v)\right)$. Then, there exists a set $X\subseteq \left(A_{t-1}\cap N^{k+1}(v)\right)$ with $|X|\ge |S|\frac{\delta dp}{2(k+1)\log d}$.
\end{claim}
\begin{proof}
Since each $s\in S$ is in $A_t$, $d_{H_{t}}(s)<(1-\delta_{t})dp$. On the other hand, since $A_t$ and $A_{t-1}$ are disjoint, each $s \in S$ is not in $A_{t-1}$ and consequently $d_{H_{t-1}}(s)\ge (1-\delta_{t-1})dp$. Thus, there are at least $(\delta_{t}-\delta_{t-1})dp=\frac{\delta dp}{\lfloor\log d\rfloor}\ge \frac{\delta dp}{\log d}$ neighbours of $s$ which are in $A_{t-1}$, let us denote them by $Y_s$. 

If we let $X_s \coloneqq \left(Y_s \cap N^{k+1}(v)\right) \subseteq \left(A_{t-1}\cap N^{k+1}(v)\right)$, then by \eqref{e:cubestructure}, $|X_s| \geq \frac{\delta dp}{\log d}-k$. On the other hand, since $X \coloneqq  \bigcup_{s\in S}X_s \subseteq  N^{k+1}(v)$ and $S \subseteq N^{k}(v)$, again by \eqref{e:cubestructure} each $x \in X$ lies in at most $k+1$ sets $X_s$. It follows by a simple double counting argument that
\[
|X|\ge \frac{|S|}{k+1}\left(\frac{\delta dp}{\log d}-k\right)\ge |S|\frac{\delta dp}{2(k+1)\log d},
\]
where we used that $k\le \tau=\lfloor\log d\rfloor\le \log d$ and $dp\ge \log^5d$.
\end{proof}
To complete the proof of Lemma \ref{l: key lemma}, note that if $v\in A_t$, then $v\notin A_{t-1}$, and by the same argument as above, there exists $S_1\subseteq \left(N(v)\cap A_{t-1}\right)$ with $|S_1|\ge \frac{\delta dp}{2\log d}$. 

Iteratively applying Claim \ref{c: helping lemma} to the sets $S_i$, for $i\in [1, t-2]\cap \mathbb{N}$, we obtain a sequence of sets $S_{i+1}\subseteq \left(N^{i+1}(v)\cap A_{t-{i+1}}\right)$ with $|S_{i+1}|\ge |S_i|\frac{\delta dp}{2(i+1)\log d}$. It follows that $$X\coloneqq S_{t-1}\subseteq \left(N^{t-1}(v)\cap A_{1}\right)$$ has size at least
\[
\prod_{i=1}^{t-1}\frac{\delta dp}{2(i+1)\log d}=\frac{1}{t!}\left(\frac{\delta dp}{2\log d}\right)^{t-1}\ge \left(\frac{\delta dp}{2t\log d}\right)^{t-1},
\]
where we used $t!\le t^{t-1}$. Since $X \subseteq A_1$, by \eqref{e:A1} $X$ satisfies the assertion of the lemma. 
\end{proof}

With these lemmas at hand, we are now ready to prove Theorem \ref{th: induced}.
\begin{proof}[Proof of Theorem \ref{th: induced}]
It suffices to show that \textbf{whp} $A_{\tau}=\varnothing$ and $|A|=o(2^d)$.

Fix $v\in V(G)$ and $t\in [2,\tau]\cap\mathbb{N}$. We start by showing
\begin{equation}\label{e:pvinAt}
\mathbb{P}[ v \in A_t] \leq \exp \left\{ - \left(\frac{\delta dp}{2t\log d}\right)^{t-1}\right\}.
\end{equation}

Indeed, by Lemma \ref{l: key lemma}, if $v\in A_t$, then there is a set $X$ of at least $\left(\frac{\delta dp}{2t\log d}\right)^{t-1}$ vertices at distance $t-1$ from $v$, such that $d_{G_p}(x) \neq (1 \pm \delta_1)dp$ for all $x \in X$. Furthermore, since every $x\in X$ is at distance exactly $t-1$ from $v$, they have the same parity, and thus $X$ is an independent set in $G$. For each $x \in X$, $d_{G_p}(x) \sim \text{Bin}(d,p)$ and so by Lemma \ref{l: chernoff}, we have that
\begin{align*}
\mathbb{P}\left[d_{G_p}(x) \neq (1\pm \delta_1)dp) \right ]&\le 2\exp\left\{-\frac{\delta_1^2d^2p^2}{3dp}\right\}\le \exp\left\{-\frac{\delta^2dp}{4\log^2d}\right\}.
\end{align*}
Since $X\subseteq N^{t-1}(v)$, there are at most $\binom{|N^{t-1}(v)|}{|X|}$ possible choices for $X$. Note that $|N^{t-1}(v)|=\binom{d}{t-1}$ (choosing $t-1$ of the $d$ coordinates to obtain a vertex at distance $t-1$ from $v$). Recalling that $d_{G_p}(x)$ is independent for each $x\in X$ and that $|X|=r\ge \left(\frac{\delta dp}{2t\log d}\right)^{t-1}$, by a union bound we obtain
\begin{align*}
    \mathbb{P}[v\in A_t]&\le \binom{\binom{d}{t-1}}{r}\exp\left\{-r\cdot \frac{\delta^2dp}{4\log^2d}\right\}\le \left(\left(ed\right)^{t-1}\exp\left\{-3\log^2d\right\}\right)^{r}\\
    &\le \exp\left\{r\left(2\log^2d - 3\log^2 d\right)\right\}\le \exp\left\{-2r\right\}\\
    &\leq \exp\left\{-\left(\frac{\delta dp}{2t\log d}\right)^{t-1}\right\},
\end{align*}
where the third inequality follows since $t\le \tau = \lfloor\log d\rfloor\le \log d$. Note that in this estimation, we used (very generously) our assumption of $p\ge \frac{\log^5d}{d}$, that is, that the numerator is polylogarithmic in $d$.

We first note that \eqref{e:pvinAt} implies that for each $t \in [2,\tau]\cap \mathbb{N}$,
\[
\mathbb{E}\left[|A_t|\right] \le 2^d\exp\left\{-\left(\frac{\delta dp}{2t\log d}\right)^{t-1}\right\} \le 2^d\exp\left\{-\log^2d\right\}.
\]
Also, by a similar application of Lemma \ref{l: chernoff}, we have
$$\mathbb{E}[|A_1|]\le 2^d\exp\left\{-\frac{\delta^2dp}{4\log^2d}\right\}\le 2^d\exp\left\{-\log^2d\right\}.$$
Recalling that $A\coloneqq \bigcup_{t=1}^{\tau}A_t$, it follows that $\mathbb{E}[|A|]\le 2^d \tau \exp\left\{-\log^2d\right\} = o(2^d)$, and so by Markov's inequality, \textbf{whp} $|A|=o(2^d)$.

Secondly, by \eqref{e:pvinAt} we obtain
\begin{align*}
\mathbb{P}[ A_\tau \neq \emptyset] &\leq \sum_{v \in V(G)} \mathbb{P}[v \in A_\tau] \leq 2^d\exp\left\{-\left(\frac{\delta dp}{5\log^2 d}\right)^{\tau-1}\right\}\\
&\le 2^d\exp\left\{-\left(\log^2d\right)^{\lfloor\log d\rfloor-1}\right\}\le 2^d\exp\{-d\}=o(1).
\end{align*}
\end{proof}
\section{Discussion}\label{s: discussion}
We showed that for any $d$-regular graph $G$ on $n$ vertices, for every $\epsilon>0$ there exists a constant $C>0$ such that if $d\ge C$ and $p\ge \frac{C}{d}$, then there typically is a matching on at least $(1-\epsilon)n$ vertices in $G_p$. Further, we showed that when $p\ge \frac{\log^5}{d}$, $Q^d_p$ typically contains an induced nearly spanning subgraph, whose degrees are tightly concentrated around the expected degree $dp$. To find this subgraph, we employed a fairly simple pruning process.

In recent years there has been an interest in the universality of properties of $G(n,p)$, and in particular in extending results on the quantitative similarity of the structure of $G(n,p)$ and $Q^d_p$ to broader classes of \emph{high-dimensional} graphs. For example, the typical emergence of the giant component, its uniqueness and its asymptotic properties have been considered \cite{CDE24,DEKK22, DEKK23, DEKK24, L22}. Moreover, Diskin and Geisler \cite{DG24} extended the result of Bollob\'as \cite{B90} to these settings as well, roughly showing that for any $d$-regular high-dimensional product graph, the hitting times of minimum degree one, connectivity, and perfect matching are \textbf{whp} the same. 
It is not hard to verify that the proof laid out in this paper generalises, almost verbatim, to $d$-regular high-dimensional Cartesian product graphs (specifically, the $t$-dimensional product of regular graphs of bounded order). In fact, one can even further relax the assumptions.
\begin{theorem}\label{th: general}
Let $G$ be a $d$-regular graph with $d=\omega(1)$. Suppose that for every $v\in V(G)$, every $k \leq\log d$, and every $u\in N^k(v)$, we have that $\left|N(u)\cap \bigcup_{i=1}^kN^i(v)\right|=O(\log d)$. Let $p\ge \frac{\log^5d}{d}$. Then, \textbf{whp} $G_p$ contains an induced subgraph $H$ such that $|V(H)|=(1-o_d(1))|V(G)|$ and for every $v \in V(H)$, $d_H(v) = (1\pm o_d(1))dp$. 
\end{theorem}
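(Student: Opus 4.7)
The plan is to run the identical pruning process from the proof of Theorem \ref{th: induced} and to follow the same overall structure. That proof uses the hypercube structure \eqref{e:cubestructure} in exactly two places: the double-counting inside Claim \ref{c: helping lemma}, and the parity argument that $X \subseteq N^{t-1}(v)$ is independent in $G$, giving mutual independence of the degrees $d_{G_p}(x)$ for $x \in X$. The hypothesis of Theorem \ref{th: general} directly replaces the first, and an in/out decomposition of degrees takes care of the second.

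For the double-counting: for every $s \in N^k(v)$ with $k \leq \log d$, the hypothesis implies that $s$ has at most $O(\log d)$ neighbours in $\bigcup_{i\leq k} N^i(v)$, hence at least $d - O(\log d)$ neighbours in $N^{k+1}(v)$; similarly each $x \in N^{k+1}(v)$ has at most $O(\log d)$ neighbours in $N^k(v)$. Plugging these into the proof of Claim \ref{c: helping lemma} replaces the factors $k+1$ and $k$ by $O(\log d)$, yielding $|X| \geq \frac{|S| \delta dp}{O(\log^2 d)}$ (with $dp \geq \log^5 d$ absorbing additive terms). Iterating gives the analogue of Lemma \ref{l: key lemma}: if $v \in A_t$, then there exists $X \subseteq N^{t-1}(v) \cap A_1$ with $|X| \geq \left(\frac{\delta dp}{O(\log^2 d)}\right)^{t-1}$.

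To replace the independence-of-degrees step, I decompose $d_{G_p}(x) = d_{G_p}^{\mathrm{in}}(x) + d_{G_p}^{\mathrm{out}}(x)$ into the contributions of $G_p$-edges from $x$ staying inside $N^{t-1}(v)$ and edges leaving it. The hypothesis makes $d_{G_p}^{\mathrm{in}}(x) \leq O(\log d)$ hold deterministically, and choosing $\delta$ so that $\delta^2 \log d \to \infty$ ensures $O(\log d) = o(\delta_1 dp)$; thus $x \in A_1$ forces $d_{G_p}^{\mathrm{out}}(x) \neq (1 \pm \delta_1/2) dp$. Crucially, for distinct $x, x' \in N^{t-1}(v)$, $d_{G_p}^{\mathrm{out}}(x)$ and $d_{G_p}^{\mathrm{out}}(x')$ depend on disjoint sets of edges---the only edge they could share is $xx'$, which lies inside $N^{t-1}(v)$ and is therefore excluded from both outer degrees. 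Hence $\{d_{G_p}^{\mathrm{out}}(x) : x \in N^{t-1}(v)\}$ is a mutually independent family, regardless of the adjacency structure within $N^{t-1}(v)$.

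With these substitutes the rest of the proof mirrors the original: Lemma \ref{l: chernoff} gives $\mathbb{P}\left[d_{G_p}^{\mathrm{out}}(x) \neq (1 \pm \delta_1/2) dp\right] \leq \exp\left(-\Omega(\delta_1^2 dp)\right)$, so by independence $\mathbb{P}[X \subseteq A_1] \leq \exp\left(-|X| \Omega(\delta_1^2 dp)\right)$, and union-bounding over the $\binom{|N^{t-1}(v)|}{r} \leq d^{(t-1)r}$ choices of $X$ yields $\mathbb{P}[v \in A_t] \leq \exp(-r \log^2 d)$ for every $t \in [2,\tau]$; the $t=1$ case is a direct Chernoff bound. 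Summing shows $\mathbb{E}[|A|] = o(|V(G)|)$ (so Markov gives $|A| = o(|V(G)|)$ \textbf{whp}), and the super-polynomial growth of $r$ at $t = \tau$ makes $|V(G)| \cdot \mathbb{P}[v \in A_\tau] = o(1)$. The main obstacle was the absence of the bipartite parity that made $X$ an independent set in the hypercube; the in/out decomposition is the key replacement and, fortunately, incurs no extra log-factor loss (unlike the naive alternative of extracting a maximum independent subset of $X$, which would cost another $\log d$).
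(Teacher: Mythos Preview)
Your proposal follows exactly the route the paper indicates (it gives no separate proof of Theorem~\ref{th: general} beyond the remark that the $Q^d$ argument ``generalises, almost verbatim''), and you carry this out correctly. Your in/out degree decomposition is a genuine and necessary addition that the paper leaves unsaid: it cleanly restores the mutual independence of the family $\{d^{\mathrm{out}}_{G_p}(x):x\in X\}$ that the bipartiteness of $Q^d$ gave for free, and it does so without the extra $\log d$ loss incurred by the alternative you mention (extracting an independent subset of $X$, which also works since the hypothesis forces $G[N^{t-1}(v)]$ to have maximum degree $O(\log d)$). One small caveat, equally present in the paper's sketch: the final union bound ``$|V(G)|\cdot\mathbb{P}[v\in A_\tau]=o(1)$'' tacitly assumes $\log|V(G)|\le d^{O(\log\log d)}$ or so, which is not among the stated hypotheses, though it holds comfortably for the product graphs the paper actually has in mind.
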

This raises the following more general question. 
\begin{question}
Let $G$ be a $d$-regular graph with $d=\omega(1)$. What `minimal' assumptions on $G$ and $p$ suffice to have: \textbf{whp} $G_p$ has an induced nearly spanning, nearly regular subgraph with all degrees concentrated around $dp$?
\end{question}

As an application of Theorem \ref{th: matching} we have that when $p\ge \frac{C}{d}$, \textbf{whp} a largest matching in $Q^d_p$ contains $(1-o_C(1))2^{d}$ vertices, that is, the first order term is $2^d$ whereas the second order term is bounded by $o_C(2^d)$. Much more precise results are known in the setting of $G(n,p)$~\cite{F86}. It would be interesting to determine more precisely the typical size of a largest matching in $Q^d_p$ when $p=\frac{C}{d}$ (for $d>C$). A first step to answer this would be to determine the second order term --- there, it is perhaps natural to conjecture that the size of the `defect set' (that is, the set of vertices that are not covered by a largest matching in $Q^d_p$) is dominated by the number of isolated vertices and thus \textbf{whp} a largest matching typically covers $2^d-(2(1-p))^d+o_C((2(1-p))^d)$ vertices (indeed, this would resonate with the known hitting time results \cite{B90}).

\paragraph{Acknowledgement} The second and third authors were supported in part by the Austrian Science Fund (FWF) [10.55776/\text{P36131}, 10.55776/\text{F1002}]. Part of this work was done while the first and the fourth authors were visiting Graz University of Technology, and they would like to thank the Graz University of Technology for its hospitality. For the purpose of open access, the authors have applied a CC-BY public copyright licence to any Author Accepted Manuscript version arising from this submission.

\bibliographystyle{abbrv}
\bibliography{perc}
\end{document}